\newtheorem*{thm}{Theorem}
\newtheorem{lemma}{Lemma}
\newcommand{\vol}{\operatorname{vol}}
\begin{document}

\title[]{An elementary proof of a lower bound\\ for the inverse of the star discrepancy}

\author[]{Stefan Steinerberger}
\address{Department of Mathematics, University of Washington, Seattle, WA 98195, USA} \email{steinerb@uw.edu}

\keywords{Star Discrepancy, Inverse, Curse of Dimensionality}
\subjclass[2010]{11K38, 65C05} 
\thanks{S.S. is supported by the NSF (DMS-2123224) and the Alfred P. Sloan Foundation.}

\begin{abstract} A central problem in discrepancy theory is the challenge of evenly distributing points $\left\{x_1, \dots, x_n \right\}$ in $[0,1]^d$. Suppose a set is so regular that for some $\varepsilon> 0$ and all $y \in [0,1]^d$ the sub-region $[0,y] = [0,y_1] \times \dots \times [0,y_d]$ contains a number of points nearly proportional to its volume and
$$\forall~y \in [0,1]^d \qquad  \left| \frac{1}{n} \# \left\{1 \leq i \leq n: x_i \in [0,y] \right\} - \vol([0,y]) \right| \leq \varepsilon,$$
how large does $n$ have to be depending on $d$ and $\varepsilon$? We give an elementary proof of the currently best known result, due to Hinrichs, showing that $n \gtrsim d \cdot \varepsilon^{-1}$.
\end{abstract}
\maketitle

\section{Introduction}
 Given a set of $n$ points $\left\{x_1, \dots, x_n \right\} \subset [0,1]^d$, the star discrepancy is a measure of regularity that is defined by the largest difference between the empirical distribution and the uniform distribution taken over all axis-parallel rectangles
$$ D_n^* = \max_{y \in [0,1]^d} \left| \frac{1}{n}\# \left\{1 \leq i \leq n: x_i \in [0,y] \right\} - \vol([0,y]) \right|,$$
where $[0,y] = [0,y_1] \times \dots \times [0,y_d]$. The main goal is to find sets of points for which this quantity is small. There are perhaps two main problems in this area: is it possible to find a sequence $(x_k)_{k=1}^{\infty}$ in $[0,1]^d$ such that the star discrepancy of the first $n$ elements is uniformly small in $n$? The best known constructions achieve the rate $(\log{n})^d n^{-1}$ as $n \rightarrow \infty$, however, the implicit constants are so large that $n$ has to be exponentially large in $d$ for these bounds to become effective.\\
This leads to the second problem, understanding the possible size of the star discrepancy in the regime where $n$ is polynomial in $d$. Suppose we want to ensure that $D_n^* \leq \varepsilon$, how large does $n$ have to be (depending on $d$ and $\varepsilon$)? The smallest possible cardinality of a set with this property is denoted by
$N_{\infty}^*(d, \varepsilon)$. Heinrich, Novak, Wasilkowski \& Wozniakowski \cite{heinrich} showed that, for some universal $c>0$, 
$$ N_{\infty}^*(d, \varepsilon) \leq c \frac{d}{\varepsilon^{2}}.$$
Aistleitner \cite{aistleitner} later proved that $c=10$ is admissible. The constant was further improved \cite{doerr2, gne2, pasing}, the current record is $c \leq 2.4968$ due to Gnewuch, Pasing \& Wei\ss ~\cite{gne3}. These constructions are all probabilistic. Doerr \cite{doerr} showed that this rate is optimal up to constants when working with iid uniformly distributed points.  This is a common theme in combinatorics: random objects behave in a certain way and the task at hand is to understand whether structured constructions can lead to a better performance. This leads to the study of lower bounds for $N_{\infty}^*(d, \varepsilon)$. Heinrich, Novak, Wasilkowski \& Wozniakowski \cite{heinrich} originally showed
$$  N_{\infty}^*(d, \varepsilon) \geq c \cdot  d \cdot \log\left(\frac{1}{\varepsilon}\right).$$
This was then improved by Hinrichs to what is still the best known result.
\begin{thm}[Hinrichs \cite{aicke}] There are universal $c, \varepsilon_0 > 0$ such that 
$$ \forall~0 < \varepsilon < \varepsilon_0  \qquad N_{\infty}^*(d, \varepsilon) \geq c \frac{d}{\varepsilon^{}}.$$
\end{thm}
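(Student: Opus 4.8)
The plan is to recast the statement in the equivalent form $D_n^* \gtrsim d/n$. If $\{x_1,\dots,x_n\}$ satisfies $D_n^* \le \varepsilon$, then in order to exclude $n < c\,d/\varepsilon$ it suffices to exhibit a single anchored box $[0,y]$ whose point count differs from its expected count $n\vol([0,y])$ by at least $c\,d$. Indeed, such a box gives $n\varepsilon \ge n D_n^* \ge c\,d$, hence $n \ge c\,d/\varepsilon$, and the restriction $\varepsilon < \varepsilon_0$ only serves to keep us in the regime $n \gtrsim d$ in which a deviation of order $d$ is possible at all. I would begin from the one-dimensional mechanism, which already carries the factor $\varepsilon^{-1}$: projecting to the $j$-th coordinate, the empirical distribution function $F_j(t) = \frac1n\#\{i : (x_i)_j \le t\}$ has one-dimensional star discrepancy at most $D_n^*$, and as $t$ crosses a data point $F_j$ jumps by $\ge 1/n$ while the comparison function $t$ moves continuously, so one of the two nearby values of $F_j(t)-t$ has modulus $\ge 1/(2n)$. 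The entire difficulty is to \emph{superimpose} the $d$ coordinatewise obstructions into one genuinely $d$-dimensional box, turning $d$ isolated unit jumps into a single deviation of size $\gtrsim d$.

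Before aggregating I would record a structural warning that constrains the shape of any valid argument. Every estimate that uses the defining inequalities $|\frac1n\#\{x_i\in[0,y]\} - \vol([0,y])| \le \varepsilon$ only linearly—union bounds over slabs, Bonferroni-type inclusion inequalities for $\bigcap_j\{x_j \le y_j\}$, or averaging the local discrepancy against a kernel—is homogeneous of degree one in $n$, so $n$ cancels and no lower bound on $n$ survives. The bound must therefore exploit the integrality of the counting function, exactly as the jump $1/n$ does in one dimension.

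Concretely, I would look for two nested anchored boxes $[0,y'] \subset [0,y]$ whose counts are \emph{forced by integrality or geometry} to differ by $\ge c\,d$ while their volumes differ by at most $c\,d/(2n)$; writing $\Delta$ for the local discrepancy, this gives $\Delta(y) - \Delta(y') \ge c\,d/(2n)$ and hence $D_n^* \ge c\,d/(4n)$. The natural raw material is the family of slabs $\{x_j \le t\}$: the marginal estimate $\#\{i : (x_i)_j \le t\} \le n(t+\varepsilon)$ bounds how many points sit near each upper face, and one would like to combine these $d$ near-face populations into one box that is under-occupied by order $d$ while losing volume only of order $d/n$.

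The main obstacle—and the step I expect to be the crux—is arranging that the $d$ coordinates contribute \emph{additively} rather than interfering. The naive greedy construction, peeling off one outermost point per coordinate starting from $y = (1,\dots,1)$, fails in precisely the relevant regime: the marginal estimates only place points within distance $\sim\varepsilon$ of each upper face, so removing one point per coordinate costs volume $\sim\varepsilon$ per step, hence $\sim d\varepsilon$ in total, which swamps the count gain of $d/n$ unless $\varepsilon \lesssim 1/n$. The genuine work is therefore a combinatorial selection—a pigeonhole over a product grid adapted to the marginals of the point set—that picks a box under- (or over-) occupied in many coordinates at once while keeping the volume defect of order $d/n$; equivalently, one must rule out that a handful of points near the corner $(1,\dots,1)$ discharge all $d$ coordinate obligations simultaneously. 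Controlling this overlap between coordinates is where the content of Hinrichs' theorem resides, and it is the part I would expect to require the most care.
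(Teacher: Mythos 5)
Your proposal correctly identifies the essential obstruction (any argument using the defining inequalities only linearly is homogeneous in $n$, so one must exploit integrality of the counting function) and correctly locates the difficulty in making the $d$ coordinates contribute additively. But it stops exactly at that point: the ``combinatorial selection --- a pigeonhole over a product grid'' that you defer to future care is the entire content of the theorem, and nothing in your write-up supplies it. As it stands this is a well-informed plan, not a proof.

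Moreover, the reduction you set up --- finding a single anchored box whose count deviates from $n\vol([0,y])$ by $\gtrsim d$ --- is a detour the paper never takes, and your volume-accounting objection to greedy peeling only applies within that framing. The paper instead produces roughly $\frac{d}{2}\lfloor \frac{1}{20\varepsilon}\rfloor$ pairwise disjoint shells $[0,y_i]\setminus[0,y_{i+1}]$, each forced to contain at least one point of $X$, and concludes $n\ge k$ by counting; no box with deviation of order $d$ is ever exhibited. Each shell needs only a deviation argument at scale $\varepsilon$: if $[0,z]\supset[0,w]\supseteq[0,1-1/d]^d$ and $\|z-w\|_{\ell^1}\ge 9\varepsilon$, a telescoping identity gives $\vol([0,z]\setminus[0,w])\ge \frac14\|z-w\|_{\ell^1}>2\varepsilon$, which together with the two defining inequalities and integrality forces a point into the difference. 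The greedy peeling you dismiss is, in refined form, exactly what works: shrink one coordinate at a time by the tiny amount $20\varepsilon/d$ (not by enough to remove ``one outermost point per coordinate,'' which is where your cost estimate of $\sim\varepsilon$ per step comes from), and argue by contradiction that if none of the $\ge d/2$ admissible single-coordinate shrinks captured a new point, then performing all of them simultaneously would move the corner by $\ell^1$-distance $\ge 10\varepsilon$ while capturing no point, contradicting the volume lemma. Total volume lost over the whole process is a constant (about $1-1/e$) and is irrelevant, because one counts points in disjoint shells rather than bounding a single local discrepancy. That ``all bad indices at once'' pigeonhole is precisely the step missing from your proposal.
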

 The original proof is highly nontrivial and uses Vapnik-Chervonenkis classes, the Sauer-Shelah Lemma \cite{sauer, shel} and a packing argument. However, it is also slightly more general since it can be applied to weighted points and to more general geometries (which our argument, in its present shape, cannot). Another proof of the Theorem, as stated, was given by Aistleitner \& Hinrichs \cite{aicke3} based on elegant combinatorial considerations. The goal of this note is to give a very short and simple argument (showing $c \geq 1/(9e)$ in sufficiently high dimensions) in the hope of drawing attention to this very interesting problem.

\section{Proof}
\subsection{Overview.}
Let $X = \left\{x_1, \dots, x_n \right\} \subset [0,1]^d$ be given and assume that
\begin{align} \label{eq:cond}
\forall~y \in [0,1]^d \qquad  \left| \frac{1}{n} \# \left\{1 \leq i \leq n: x_i \in [0,y] \right\} -  \vol([0,y]) \right| \leq \varepsilon.
\end{align}
The goal is to construct a lower bound on $n$ based only on $\varepsilon$ and $d$.
The proof proceeds by constructing a sequence $y_0, y_1, \dots, y_k \in [0,1]^d$ such that
$$ \left[ 0, 1\right]^d = \left[ 0, y_0\right] \supset [0,y_1] \supset  [0,y_2] \dots \supset [0,y_k] \supseteq \left[ 0, 1-\frac{1}{d} \right]^d$$
with the property that
$ \# \left( X \cap  \left([0,y_i] \setminus [0,y_{i+1}] \right)\right) \geq 1.$
This clearly implies $n \geq k$.
 We will moreover specify that each $y_i$ can be written as
$$ y_i = \left( 1 - a_{i,1} \frac{20\varepsilon}{d}, 1- a_{i,2} \frac{20\varepsilon}{d}, \dots, 1- a_{i,d} \frac{20\varepsilon}{d} \right)$$
where $a_{i,j} \in \mathbb{N}$ and $a_{i,j} \leq 1/(20\varepsilon)$. Moreover, $y_{i+1}$ will arise from taking $y_i$ and increasing a specific index $a_{i,j}$ by 1.
 The remaining question is whether it is possible to obtain a large value of $k$ in this framework.
\begin{center}
\begin{figure}[h!]
\begin{tikzpicture}
\draw [thick] (0,0) -- (0,2) -- (2,2) -- (2,0) -- (0,0);
\draw [ultra thick] (0,0) -- (0,2) -- (2,2) -- (2,0) -- (0,0);
\draw [thick] (3,0) -- (5,0) -- (5,2) -- (3,2) -- (3,0);
\draw [ultra thick] (3,0) -- (4.5,0) -- (4.5,2) -- (3,2) -- (3,0);
\filldraw (4.75, 0.5) circle (0.04cm);
\draw [thick] (6,0) -- (8,0) -- (8,2) -- (6,2) -- (6,0);
\draw [ultra thick] (6,0) -- (7.5,0) -- (7.5,1.5) -- (6,1.5) -- (6,0);
\filldraw (7.75, 0.5) circle (0.04cm);
\filldraw (7.3, 1.75) circle (0.04cm);
\draw [thick] (9,0) -- (11,0) -- (11,2) -- (9,2) -- (9,0);
\draw [ultra thick] (9,0) -- (10,0) -- (10,1.5) -- (9,1.5) -- (9,0);
\filldraw (10.75, 0.5) circle (0.04cm);
\filldraw (10.3, 1.75) circle (0.04cm);
\filldraw (10.23, 1.2) circle (0.04cm);
\end{tikzpicture}
\caption{Sketch of the argument: step-wise shrinking of a box. Each step uncovers a new point.}
\end{figure}
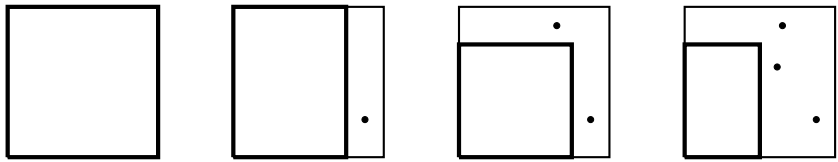
\end{center}

\subsection{Two Lemmata} We start with a simple volume comparison inequality: if we have two large boxes, one containing the other, with very different endpoints, then one contains a larger number of points than the other.
\begin{lemma}  Assume that $\left\{x_1, \dots, x_n\right\} \subset [0,1]^d$ satisfies \eqref{eq:cond}.
If $z,w \in [0,1]^d$ satisfy
$$ [0,z] \supset [0,w] \supseteq \left[0, 1- \frac{1}{d} \right]^d \qquad \mbox{and} \quad \| z - w\|_{\ell^1} \geq 9 \varepsilon,$$
then there is an element of $X$ contained in $[0,z] \setminus [0,w]$ and
$$ \# (X \cap \left( [0,z] \setminus [0,w] \right)) \geq 1.$$
\end{lemma}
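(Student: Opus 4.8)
The plan is to pass from counting points to comparing volumes, and then to bound the volume difference from below using the hypothesis that all coordinates lie near $1$. Since $[0,w] \subseteq [0,z]$, the set $X \cap [0,z]$ is the disjoint union of $X \cap [0,w]$ and $X \cap ([0,z] \setminus [0,w])$, so
\begin{equation*}
\#\left(X \cap ([0,z] \setminus [0,w])\right) = \#(X \cap [0,z]) - \#(X \cap [0,w]).
\end{equation*}
Applying \eqref{eq:cond} to $z$ as a lower bound and to $w$ as an upper bound gives
\begin{equation*}
\#\left(X \cap ([0,z] \setminus [0,w])\right) \geq n\left(\vol([0,z]) - \vol([0,w]) - 2\varepsilon\right).
\end{equation*}
Hence it suffices to show $\vol([0,z]) - \vol([0,w]) > 2\varepsilon$: the left-hand side above is then strictly positive, and since the count is a nonnegative integer, it must be at least $1$.

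The heart of the argument is therefore a purely geometric volume estimate. Writing $\vol([0,y]) = \prod_{j=1}^d y_j$, I would use the telescoping identity
\begin{equation*}
\prod_{j=1}^d z_j - \prod_{j=1}^d w_j = \sum_{j=1}^d \left(\prod_{k<j} z_k\right)(z_j - w_j)\left(\prod_{k>j} w_k\right)
\end{equation*}
and exploit that every coordinate of $z$ and $w$ is at least $1 - 1/d$; this is exactly where the assumption $[0,w] \supseteq [0,1-\tfrac{1}{d}]^d$ (together with $[0,z] \supset [0,w]$, which also gives $z_j \geq w_j \geq 0$) enters. In each summand the $d-1$ untouched factors are each bounded below by $1-1/d$, so their product is at least $(1-1/d)^{d-1}$, and since $\|z-w\|_{\ell^1} = \sum_j (z_j - w_j)$, the difference is at least $(1-1/d)^{d-1}\|z-w\|_{\ell^1}$.

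Combining this with $\|z-w\|_{\ell^1} \geq 9\varepsilon$ and the elementary inequality $(1-1/d)^{d-1} > 1/e$ (which follows from $(1+1/m)^m < e$ applied with $m = d-1$) yields $\vol([0,z]) - \vol([0,w]) > 9\varepsilon/e$. Since $9/e \approx 3.31 > 2$, this comfortably beats the loss of $2\varepsilon$ coming from the discrepancy hypothesis, closing the argument. I expect the main obstacle to be precisely this volume estimate: the constraint that all coordinates are close to $1$ is essential, because for boxes with small coordinates a large $\ell^1$-displacement can change the volume by an arbitrarily small amount. The threshold $9$ is chosen to leave enough room after absorbing both the factor $1/e$ and the $-2\varepsilon$ discrepancy loss.
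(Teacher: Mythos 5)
Your proof is correct and follows essentially the same route as the paper: the same telescoping identity for the volume difference, the same use of \eqref{eq:cond} on both boxes to reduce the problem to showing $\vol([0,z])-\vol([0,w]) > 2\varepsilon$, with only a marginally sharper intermediate constant ($(1-1/d)^{d-1} > 1/e$ in place of the paper's $(1-1/d)^d \geq 1/4$).
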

\begin{proof} The proof is comprised of two parts. The first is to show that $ [0,z] \setminus [0,w]$ has large volume. We use an argument of Hinrichs \cite{aicke} to bound
\begin{align*}
 \vol([0,z] \setminus [0,w]) &= \prod_{i=1}^{d} z_i - \prod_{i=1}^{d} w_i = \sum_{i=1}^{d} z_1 \dots z_{i-1} (z_i - w_i) w_{i+1} \dots w_d \\
 &\geq \left( \prod_{i=1}^{d} w_i\right) \cdot \sum_{i=1}^{d} (z_i - w_i) \geq \left(1- \frac{1}{d}\right)^d \cdot \|z - w\|_{\ell^1} \geq \frac{1}{4} \|z-w\|_{\ell^1}.
 \end{align*}
For the second part, note that $[0,z]$ contains at least as many elements from $X$ as $[0,w]$, we want to show that it contains
 more. 
If $\|z-w\|_{\ell^1} \geq 9 \varepsilon$, then we have $ \vol([0,z] \setminus [0,w]) > 2 \varepsilon$
which implies that $[0,z] \setminus [0,w]$ intersects $X$.
\end{proof}

\begin{lemma} Assume that $\left\{x_1, \dots, x_n\right\} \subset [0,1]^d$ satisfies \eqref{eq:cond}. Let 
$$ y_i = \left( 1 - a_{i,1} \frac{20\varepsilon}{d}, 1- a_{i,2} \frac{20\varepsilon}{d}, \dots, 1- a_{i,d} \frac{20\varepsilon}{d} \right) \in [0,1]^d$$
and assume that $a_{i,j} \leq 1/(20\varepsilon)$ for all $1 \leq j \leq d$. Suppose moreover that there are at least $d/2$ indices satisfying $a_{i,j} \leq 1/(20\varepsilon) - 1$. Then there exists $1 \leq k \leq d$ such that  $a_{i,k} \leq 1/(20\varepsilon) - 1$ with the property that increasing $a_{i,k}$ by 1 $$ a_{i+1, j} = \begin{cases} a_{i,j} \qquad &\mbox{if}~j \neq k \\ a_{i,j}+ 1 \qquad &\mbox{if}~j=k \end{cases}$$
defines a point $y_{i+1} \in [0,1]^d$ with $[0,y_i] \supset [0,y_{i+1}] \supseteq [0,1-1/d]^d$ so that
$$ \# \left( X \cap  \left([0,y_i] \setminus [0,y_{i+1}] \right)\right) \geq 1.$$
\end{lemma}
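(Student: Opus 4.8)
The plan is to apply Lemma 1, but \emph{not} to a single coordinate step. Increasing one index $a_{i,k}$ by $1$ moves the corresponding endpoint by only $20\varepsilon/d$, so the associated $\ell^1$-displacement is $20\varepsilon/d$, which lies far below the threshold $9\varepsilon$ that Lemma 1 requires once $d$ is large. The key idea is therefore to perform many increments \emph{at once}, apply Lemma 1 to this aggregate step, and then localize the resulting point back to a single coordinate.

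Concretely, let $S = \{1 \le j \le d : a_{i,j} \le 1/(20\varepsilon) - 1\}$ be the set of indices that can still be increased; by hypothesis $|S| \ge d/2$. Define $w \in [0,1]^d$ by increasing $a_{i,j}$ by one for \emph{every} $j \in S$ simultaneously, i.e.\ $w_j = 1 - (a_{i,j}+1)\tfrac{20\varepsilon}{d}$ for $j \in S$ and $w_j = y_{i,j}$ otherwise. Since $a_{i,j}+1 \le 1/(20\varepsilon)$ for each $j \in S$, every coordinate of $w$ is at least $1 - 1/d$, so $[0,w] \supseteq [0,1-1/d]^d$, and because $w_j < y_{i,j}$ for $j \in S$ we get the strict inclusion $[0,y_i] \supset [0,w]$. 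Moreover
$$ \| y_i - w\|_{\ell^1} = |S|\cdot \tfrac{20\varepsilon}{d} \ge \tfrac{d}{2}\cdot\tfrac{20\varepsilon}{d} = 10\varepsilon \ge 9\varepsilon. $$
Hence the pair $(y_i, w)$ satisfies all hypotheses of Lemma 1, which produces a point $x^* \in X \cap \left([0,y_i]\setminus [0,w]\right)$.

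It remains to localize $x^*$ to a single increment. For $k \in S$ let $z^{(k)}$ denote the point obtained from $y_i$ by increasing only $a_{i,k}$ by one; the box $[0, z^{(k)}]$ agrees with $[0,y_i]$ except that the $k$-th upper endpoint is lowered from $y_{i,k}$ to $w_k$, so $[0,y_i]\setminus[0,z^{(k)}] = \{p \le y_i : p_k > w_k\}$. A point of $[0,y_i]$ fails to lie in $[0,w]$ precisely when it exceeds $w_k$ in at least one coordinate $k \in S$ (a non-candidate coordinate cannot be exceeded, since there $w_k = y_{i,k}$ and $p_k \le y_{i,k}$). This yields the elementary covering identity
$$ [0,y_i]\setminus[0,w] = \bigcup_{k \in S} \left([0,y_i]\setminus[0,z^{(k)}]\right). $$
As $x^*$ lies in the left-hand side, it lies in some term on the right; taking $y_{i+1} = z^{(k)}$ for that index $k$ gives $\#\left(X \cap \left([0,y_i]\setminus[0,y_{i+1}]\right)\right) \ge 1$, which is exactly the assertion, with $k \in S$ ensuring $a_{i,k} \le 1/(20\varepsilon)-1$ as required.

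The only genuinely delicate point is recognizing that Lemma 1 cannot be used step by step: the whole argument hinges on aggregating the roughly $d/2$ available increments in order to clear the $9\varepsilon$ barrier, and then using the covering identity above to descend back to a single coordinate. The remaining work is bookkeeping, namely checking that $w$ still contains the anchor box $[0,1-1/d]^d$; this is precisely what the hypothesis $a_{i,j} \le 1/(20\varepsilon)-1$ on the candidate indices guarantees.
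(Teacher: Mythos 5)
Your proof is correct and is essentially the paper's argument run forwards rather than by contradiction: the paper assumes every candidate index is ``bad,'' increments them all simultaneously, and derives a contradiction with Lemma 1, whereas you increment them all, invoke Lemma 1 to produce a point, and then use the covering identity $[0,y_i]\setminus[0,w]=\bigcup_{k\in S}\left([0,y_i]\setminus[0,z^{(k)}]\right)$ to localize it to a single coordinate. The only difference is that you make explicit the set-theoretic decomposition that the paper leaves implicit in its ``bad index'' bookkeeping; the substance is identical.
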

\begin{proof} We argue by contradiction: let $j$ be an index such that $a_{i,j} \leq 1/(20\varepsilon) - 1$. We call an index $j$ \textit{bad} if increasing $a_{i,j}$ by 1 does not capture an additional point:
 $$ \# \left( X \cap  \left([0,y_i] \setminus   \left( 1 - a_{i,1} \frac{20\varepsilon}{d},\dots, 1- (a_{i,j}+1) \frac{20\varepsilon}{d}, \dots, 1- a_{i,d} \frac{20\varepsilon}{d} \right) \right)\right) =0.$$
 This implies that if we simultaneously increase \textit{all} $a_{i,j}$ corresponding to bad indices, we obtain a new box with exactly the same points. More formally, setting
$$ a_{i+1, j} = \begin{cases} a_{i,j} + 1 \qquad &\mbox{if}~j ~ \mbox{is bad} \\ a_{i,j} \qquad &\mbox{otherwise.} \end{cases}$$
leads to a new box with
$ \# \left( X \cap  \left([0,y_i] \setminus [0,y_{i+1}] \right)\right) = 0.$
Since there are at least $d/2$ indices smaller than $\leq 1/(20\varepsilon) - 1$, we deduce that if all of them were bad, then the new point $y_{i+1}$ satisfies
$$ \| y_{i+1} - y_i\|_{\ell^1} \geq \frac{d}{2} \frac{20 \varepsilon}{d} \geq 10 \varepsilon$$
which contradicts Lemma 1. Thus there exists at least one index $1 \leq k \leq d$ such that $a_{i,k} \leq 1/(20\varepsilon) - 1$ and so that $k$ is not bad. This proves the result.
\end{proof}

\begin{proof}[Proof of the Theorem] We apply Lemma 2 iteratively starting with $[0,y_0] = [0,1]^d$. Each application of Lemma 2 finds one new point in the set. Lemma 2 stops being applicable as soon as half of the indices are of size $1/(20\varepsilon)$. Thus there are at least
$$ n \geq k \geq  \frac{d}{2} \left\lfloor \frac{1}{20\varepsilon} \right\rfloor \gtrsim \frac{d}{\varepsilon} \qquad \mbox{points.}$$
\end{proof}

\textbf{Acknowledgment.} The author is grateful for helpful conversations with Christoph Aistleitner, Michael Gnewuch and Noah Kravitz.

\end{document}